\newtheorem{theorem}{Theorem}
\newtheorem{conj}[theorem]{Conjecture}
\newtheorem{lemma}[theorem]{Lemma}
\newtheorem*{remark*}{Remark}
\newtheorem*{prob*}{Problem}
\def\bpmod#1{\nonscript\mskip-\medmuskip\mkern5mu\mathbin
  {\rm(mod }\penalty900\mkern
  5mu\nonscript\mskip-\medmuskip#1\rm)}
\title{The mod $k$ chromatic index of graphs is $O(k)$%
\thanks{\scriptsize This research has been partially supported by
  Coordenação de Aperfeiçoamento  
de Pessoal de Nível Superior - Brasil -- CAPES -- Finance Code 001. 
F.~Botler is supported by CNPq (423395/2018-1)
and by FAPERJ (211.305/2019). Y.~Kohayakawa is partially supported by
CNPq (311412/2018-1, 423833/2018-9) and FAPESP (2018/04876-1).
The research that led to this paper started at WoPOCA 2019, which was
financed by FAPESP (2015/11937-9) and CNPq (425340/2016-3,
423833/2018-9). FAPERJ is the Rio de Janeiro Research Foundation.
FAPESP is the S\~ao Paulo Research Foundation.  CNPq is the National 
Council for Scientific and Technological Development of Brazil. }
}
\author{Fábio Botler,\footnote{\scriptsize Programa de Engenharia de
    Sistemas e Computação, COPPE, Universidade Federal do Rio de
    Janeiro, Brazil} 
  $\;$Lucas Colucci and Yoshiharu
  Kohayakawa\footnote{\scriptsize
    Instituto de Matemática e Estatística, Universidade de São Paulo,
    Brazil}}
\newcommand*\patchAmsMathEnvironmentForLineno[1]{%
\expandafter\let\csname old#1\expandafter\endcsname\csname #1\endcsname
\expandafter\let\csname oldend#1\expandafter\endcsname\csname end#1\endcsname
\renewenvironment{#1}%
{\linenomath\csname old#1\endcsname}%
{\csname oldend#1\endcsname\endlinenomath}}%
\newcommand*\patchBothAmsMathEnvironmentsForLineno[1]{%
\patchAmsMathEnvironmentForLineno{#1}%
\patchAmsMathEnvironmentForLineno{#1*}}%
\begin{document}
\onehalfspace
\shortdate
\yyyymmdddate
\settimeformat{ampmtime}
\date{}

\maketitle

\begin{abstract}
  Let $\chi'_k(G)$ denote the minimum number of colors needed to color
  the edges of a graph $G$ in a way that the subgraph spanned by the
  edges of each color has all degrees congruent to $1 \bpmod k$.  Scott
  [{\em Discrete Math.~175}, 1-3 (1997), 289--291] proved that
  $\chi'_k(G)\leq5k^2\log k$, and thus settled a question of Pyber
  [{\em Sets, graphs and numbers} (1992), pp.~583--610], who had asked
  whether~$\chi_k'(G)$ can be bounded solely as a function of~$k$.  We
  prove that $\chi'_k(G)=O(k)$, answering affirmatively a question of
  Scott.
\end{abstract}

\vspace{.3cm}

Throughout this paper, unless stated otherwise, \(k\geq 2\) denotes an integer. 
All graphs considered here are simple, and $e(G)$ denotes the number
of edges in the graph $G$.
A \emph{\(\chi'_k\)-coloring} of \(G\) is a coloring of the edges of
\(G\) in which the subgraph spanned by the edges of each color has all
degrees congruent to $1 \bpmod k$, 
and we denote by $\chi'_k(G)$ the minimum number of colors in a
\(\chi'_k\)-coloring of \(G\). Pyber~\cite{pyber1991covering} proved
that $\chi'_2(G) \leq 4$ for every graph $G$ and asked whether
$\chi'_k(G)$ is bounded by some function of \(k\) only. Scott
\cite{scott1997graph} proved that $\chi'_k(G) \leq 5k^2\log k$ for
every graph \(G\), and in turn asked if $\chi'_k(G)$ is in fact
bounded by a linear function of~$k$.  
This would be best possible apart from the multiplicative constant,
as \(\chi'_k(K_{1,k})=k\).  In this paper, we answer Scott's question
affirmatively. 

We shall make use of the following two results.

\begin{lemma}[Mader \cite{mader1978existenz}]\label{lem:connectivity}
  If $k \geq 1$, $G$ is a graph on $n$ vertices, and $e(G) \geq 2kn$, then $G$
  contains a $k$-connected subgraph.  
\end{lemma}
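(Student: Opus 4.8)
The plan is to deduce the stated bound from Mader's sharp edge-density threshold for the existence of a $k$-connected subgraph, and then to sketch a self-contained proof of that sharp statement by induction. Concretely, I would first reduce to the following assertion: if $n\ge 2k-1$ and $e(G)>(2k-3)(n-k+1)$, then $G$ contains a $k$-connected subgraph. The reduction is routine: from $e(G)\ge 2kn$ and $e(G)\le\binom n2$ one gets $4k\le n-1$, so in particular $n\ge 2k-1$; and the elementary identity $2kn-(2k-3)(n-k+1)=3n+(2k-3)(k-1)>0$ shows that the hypothesis $e(G)\ge 2kn$ already puts $G$ above the threshold. (The case $k=1$ is trivial, so I assume $k\ge 2$ throughout.)

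For the sharp assertion I would induct on $n$. In the base case $n=2k-1$ the hypothesis forces $e(G)=\binom{2k-1}{2}$, i.e.\ $G=K_{2k-1}$, which is $(2k-2)$-connected and hence $k$-connected since $2k-2\ge k$. For the inductive step ($n\ge 2k$), I would first dispose of low-degree vertices: if some $v$ has $\deg(v)\le 2k-3$, then $G-v$ has $n-1\ge 2k-1$ vertices and $e(G-v)\ge e(G)-(2k-3)>(2k-3)\big((n-1)-k+1\big)$, so the inductive hypothesis applied to $G-v$ yields the desired subgraph. Hence we may assume $\delta(G)\ge 2k-2$. If $G$ is itself $k$-connected we are done; otherwise there is a separating set $S$ with $|S|\le k-1$ and a decomposition $G=G_1\cup G_2$ with $V(G_1)\cap V(G_2)=S$, both $V(G_i)\setminus S$ nonempty, and no edge joining $V(G_1)\setminus S$ to $V(G_2)\setminus S$.

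The crux is to push the induction through this separation. Writing $n_i=|V(G_i)|$, we have $n_1+n_2=n+|S|\le n+k-1$ and $e(G_1)+e(G_2)=e(G)+e(G[S])\ge e(G)$. If both $G_i$ failed the threshold, i.e.\ $e(G_i)\le(2k-3)(n_i-k+1)$ for $i=1,2$, summing would give $e(G)\le(2k-3)(n_1+n_2-2k+2)\le(2k-3)(n-k+1)$, contradicting the hypothesis; so at least one $G_i$ exceeds the threshold. That $G_i$ also has $n_i<n$ (the other side contributes at least one vertex outside $S$) and $n_i\ge 2k-1$ (any vertex of $V(G_i)\setminus S$ has all of its $\ge 2k-2$ neighbours inside $G_i$), so the inductive hypothesis applies to it. The main obstacle is exactly this balancing act: the threshold function must be superadditive enough to survive the $+(k-1)$ overlap produced by a separator of size $k-1$, yet weak enough to be implied by the given bound $2kn$ and to pin down the base case as a complete graph. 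The linear function $x\mapsto(2k-3)(x-k+1)$ is what threads this needle, and recognising that the bound $2kn$ on its own does \emph{not} close the induction — one loses a little ground at each separation — is the one genuinely non-obvious point; everything else is bookkeeping.
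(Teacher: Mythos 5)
The paper offers no proof of this lemma: it is imported directly from Mader with a citation, so there is no in-paper argument to compare yours against. What you propose is the classical inductive proof of Mader's sharp theorem (the one reproduced, for instance, in Diestel's textbook), and it is correct. The reduction from the convenient bound $2kn$ to the sharp threshold $(2k-3)(n-k+1)$ checks out — for $n\ge 1$ the hypotheses $e(G)\ge 2kn$ and $e(G)\le\binom n2$ even force $n\ge 4k+1$, and the identity $2kn-(2k-3)(n-k+1)=3n+(2k-3)(k-1)>0$ is right for $k\ge 2$. The base case $n=2k-1$ does pin down $K_{2k-1}$, which is $(2k-2)\ge k$ connected; deleting a vertex of degree at most $2k-3$ preserves the threshold; and in the separator step the two key facts both hold: any vertex of $V(G_i)\setminus S$ keeps all of its at least $2k-2$ neighbours inside $G_i$, so $n_i\ge 2k-1$, and the superadditivity computation $(2k-3)(n_1+n_2-2k+2)\le(2k-3)(n-k+1)$ needs only $|S|\le k-1$ and $2k-3\ge 0$. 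Two cosmetic points: state explicitly that the induction on $n$ is in its strong form (the subgraph $G_i$ you recurse on may be much smaller than $G-v$), and note that $n\ge 1$ is implicitly assumed (for $n=0$ the lemma as literally stated has a true hypothesis and a false conclusion, a degeneracy the paper also ignores). Since you prove the sharp threshold rather than merely the $2kn$ bound the paper quotes, your write-up is self-contained where the paper's is not; the cost is carrying the more delicate linear function $x\mapsto(2k-3)(x-k+1)$ through the induction, which, as you observe, is exactly what is needed to survive the overlap created by a separator of size $k-1$.
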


\begin{lemma}[Thomassen \cite{thomassen2014graph}]\label{lem:factormodk}
  If $k \geq 1$ and $G$ is a $(12k-7)$-edge-connected graph with an even number of
  vertices, then $G$ has a spanning subgraph in which each vertex
  has degree congruent to $k \bpmod {2k}$.
\end{lemma}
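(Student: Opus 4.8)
The plan is to show the existence of the desired spanning subgraph $H$ (with $d_H(v)\equiv k\bpmod{2k}$ for every $v$) by first isolating the single global obstruction and then using the edge-connectivity to remove it. Summing the target degrees gives $\sum_v d_H(v)\equiv k\lvert V(G)\rvert \bpmod{2k}$, while $\sum_v d_H(v)=2e(H)$ is necessarily even; reducing modulo $2$ forces $k\lvert V(G)\rvert$ to be even. When $k$ is even this is automatic, and when $k$ is odd it is precisely the hypothesis that $\lvert V(G)\rvert$ is even (each $d_H(v)$ is then odd, so there must be an even number of vertices). Thus the even-vertex assumption is exactly what makes the target residue sequence feasible, and the entire content of the lemma is that, once this lone global constraint holds, edge-connectivity linear in $k$ lets one realize it.

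First I would recast the factor problem as an orientation problem. Fixing a reference orientation of $G$ and encoding a spanning subgraph by the edges whose chosen direction agrees with the reference, a standard reformulation turns the condition $d_H(v)\equiv k\bpmod{2k}$ into a prescription on the out-degrees of an orientation of $G$ modulo $2k$, with a fixed target residue $\beta(v)$ determined by $d_G(v)$ and $k$. Since $\sum_v d^+(v)=e(G)$ for any orientation, such a prescription is subject to the single compatibility condition $\sum_v\beta(v)\equiv e(G)\bpmod{2k}$, which is exactly the parity discussion above; the even-vertex hypothesis therefore guarantees that a consistent prescription exists. In this language the lemma reduces to a \emph{modulo-orientation theorem}: a sufficiently edge-connected graph admits an orientation whose out-degrees meet any compatible prescription modulo $2k$.

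The engine for the orientation statement is an induction on $\lvert V(G)\rvert$ driven by Nash--Williams--Tutte tree packing. Edge-connectivity $12k-7$ guarantees of order $6k$ edge-disjoint spanning trees, and these furnish, at every vertex simultaneously, a reservoir of out-degree adjustments: reversing the edges along a path in one of the trees changes the out-degrees only at the two endpoints, by $+1$ and $-1$, leaving every other out-degree unchanged, so it transfers one unit of out-degree between its endpoints. Having $\Omega(k)$ edge-disjoint trees lets one compose such transfers to move each out-degree through a full system of residues modulo $2k$ while preserving the total $e(G)$. One then satisfies the residues vertex by vertex, contracts a vertex (or a highly connected piece) once its constraint is met, and appeals to the inductive hypothesis on the smaller graph, which remains highly edge-connected. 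The bound $12k-7$ is calibrated so that enough edge-disjoint spanning trees—hence an adjustment range of size at least $2k$ at every vertex—survives each contraction.

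The main obstacle is exactly this inductive orientation argument: one must control the interaction between the path-transfers (which, when composed, can disturb several out-degrees at once) and the contraction step, and verify that edge-connectivity linear in $k$ is preserved throughout so that the tree packing, and with it the per-vertex adjustment window of length $2k$, never degrades below what is required. Showing that the modest, linear connectivity $12k-7$ suffices—rather than the quadratic bound a crude argument would produce—is the crux of the proof; by comparison, the reduction from subgraphs to orientations and the global parity bookkeeping handled by the even-vertex hypothesis are routine.
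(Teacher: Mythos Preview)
The paper does not prove this lemma; it is quoted as a result of Thomassen and used as a black box. There is therefore no proof in the paper to compare your proposal against.

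Judged as a sketch of Thomassen's own argument, your outline is in the right spirit but diverges in its mechanics. Thomassen does exploit Nash--Williams tree packing and an induction on the vertex set, and the parity discussion you give is exactly the reason for the even-order hypothesis. However, he does not pass through an orientation reformulation: his argument works directly with the subgraph, using the edges of several edge-disjoint spanning trees (rooted at a chosen vertex) to adjust vertex degrees one unit at a time, together with a lifting step that preserves high edge-connectivity under contraction. Your path-reversal-in-a-tree picture is an orientation analogue of this adjustment idea, but the intermediate translation to out-degree prescriptions is an unnecessary detour and, as you state it (``encoding a spanning subgraph by the edges whose chosen direction agrees with the reference''), is not evidently a valid correspondence between spanning subgraphs of $G$ and orientations of $G$.

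You are candid that the crux---controlling the induction so that linear rather than quadratic edge-connectivity suffices---is left undone, and that is indeed where essentially all of the work in Thomassen's paper lies. As written, your proposal is a plausible high-level plan rather than a proof.
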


We say that a graph \(G\) is \emph{\(k\)-divisible} if \(k\) divides
the degree
of every vertex of~$G$.
Lemma~\ref{lem:factormodk} thus guarantees the existence of a
\(k\)-divisible spanning subgraph in~$G$ when \(G\) is
\((12k-7)\)-edge-connected.

\begin{lemma}\label{lem:main}
  If \(G\) is a graph on $n$ vertices and does not contain a non-empty
  \(k\)-divisible subgraph, then \(e(G) < 2(12k-6)n\). 
\end{lemma}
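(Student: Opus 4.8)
The plan is to prove the contrapositive: assuming \(e(G)\geq 2(12k-6)n\), I will exhibit a non-empty \(k\)-divisible subgraph of \(G\). Since \(12k-6\geq 1\), Lemma~\ref{lem:connectivity} applies with parameter \(12k-6\) and produces a \((12k-6)\)-connected subgraph \(H\subseteq G\).

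The second step is to massage \(H\) into a graph to which Lemma~\ref{lem:factormodk} can be applied, namely one that is \((12k-7)\)-edge-connected and has an even number of vertices. If \(|V(H)|\) is even, put \(H'=H\); if it is odd, delete an arbitrary vertex \(v\) and put \(H'=H-v\). In both cases \(|V(H')|\) is even and positive (an \(m\)-connected graph has more than \(m\) vertices), and \(H'\) is at least \((12k-7)\)-connected, since removing a single vertex drops the connectivity by at most one; in particular \(H'\) is \((12k-7)\)-edge-connected. This is precisely why the bound is stated with \(12k-6\) rather than \(12k-7\).

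Now apply Lemma~\ref{lem:factormodk} to \(H'\) with parameter \(k\): it yields a spanning subgraph \(F\subseteq H'\) in which every vertex has degree congruent to \(k \bpmod{2k}\). The key observation is that \(0\not\equiv k \bpmod{2k}\) when \(k\geq 1\), so every vertex of \(F\) has degree at least \(k\geq 1\); hence \(F\) has at least one edge, and since every such degree is a multiple of \(k\), the graph \(F\) is \(k\)-divisible. As \(F\subseteq H'\subseteq G\), this contradicts the hypothesis that \(G\) has no non-empty \(k\)-divisible subgraph, and therefore \(e(G)<2(12k-6)n\).

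Given the two cited lemmas the argument is short; the only point that needs a little care is the parity correction, together with the (standard) observation that deleting one vertex keeps the graph \((12k-7)\)-edge-connected. I do not anticipate any serious obstacle beyond bookkeeping the constants.
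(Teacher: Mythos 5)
Your proposal is correct and follows exactly the paper's argument: Mader's lemma with parameter $12k-6$, the parity fix by deleting one vertex, the observation that the resulting graph is $(12k-7)$-connected and hence $(12k-7)$-edge-connected, and then Thomassen's lemma to extract the non-empty $k$-divisible spanning subgraph. The extra details you supply (non-emptiness of $H'$ and of the factor $F$) are accurate and only make explicit what the paper leaves implicit.
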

\begin{proof}
  Let $G$ be a graph and suppose that $e(G) \geq 2(12k-6)n$.
  Lemma~\ref{lem:connectivity} tells us that~$G$ contains a
  $(12k-6)$-connected subgraph~$H$.  If~$H$ has an odd number of
  vertices, let $H'=H-v$ for an arbitrary vertex \(v\in V(H)\).
  Otherwise, let $H'=H$.  Then~$H'$ has an even number of vertices and
  is $(12k-7)$-connected, and hence is $(12k-7)$-edge-connected also.
  Lemma \ref{lem:factormodk} tells us that $H'$ contains a non-empty
  $k$-divisible subgraph and therefore so does~$G$.
\end{proof}

Given an integer \(d\), we say that a graph \(G\) is
\emph{\(d\)-degenerate} if there is an ordering \(v_1,\ldots,v_n\) of
its vertices for which the number of neighbors of \(v_i\) in
\(V_i=\{v_1,\ldots,v_{i-1}\}\) is at most \(d\).  In this case, the
neighbors of~$v_i$ in~$V_i$ are its \emph{left neighbors} and the
neighbors of~$v_i$ in~$\{v_{i+1},\dots,v_n\}$ are its \emph{right
  neighbors}.  An edge \(uv\) is a \emph{left edge} of \(v\) if \(u\)
is a left neighbor of \(v\).  A left edge~$uv$ of~$v$ is a \emph{right
edge} of~$u$.

\begin{lemma}\label{lem:degen}
  Let $G$ be a $d$-degenerate graph.  Then $\chi'_k(G) \leq 4d+2k-2$.
\end{lemma}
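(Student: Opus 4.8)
The plan is to fix a degeneracy ordering $v_1,\dots,v_n$, in which each $v_i$ has at most $d$ left edges, and to build a $\chi'_k$-coloring by processing the vertices in the \emph{reverse} order $v_n,v_{n-1},\dots,v_1$. When $v_i$ is reached, every right edge of $v_i$ has already been colored — each is a left edge of a right neighbor of $v_i$, which comes earlier in the reverse order — so all that remains is to color the at most $d$ left edges of $v_i$. I \emph{finalize} $v_i$ at this step: I choose these colors so that afterwards the degree of $v_i$ in every color class is $0$ or $\equiv 1\bpmod k$. This is legitimate because the later steps only color edges inside $\{v_1,\dots,v_{i-1}\}$, so a finalized vertex is never touched again and stays valid. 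The whole proof thus reduces to guaranteeing that each finalization step can be carried out, and the obvious tension is that finalizing $v_i$ with only $\leq d$ edges is hopeless unless the already-colored right edges of $v_i$ lie on few colors with multiplicities close to $1\bpmod k$.

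To arrange this I would carry a structural invariant and split the $4d+2k-2$ colors into $2d$ ``base'' colors, $2d$ ``relay'' colors and $2(k-1)$ ``adjustment'' colors. Since $G$ is $d$-degenerate, its edges decompose into $d$ forests, hence (splitting each forest into two star forests) into $2d$ star forests $F_1,\dots,F_{2d}$; assign one base color and one relay color to each $F_i$, which in particular fixes a base color $\beta(v)$ for every vertex $v$. For each $v$, commit in advance to a set $S(v)$ of right edges of $v$ with $|S(v)|\equiv 1\bpmod k$, to be colored $\beta(v)$, leaving at most $k-1$ right edges of $v$ for the adjustment colors. The invariant, maintained throughout, is that every unprocessed vertex $v$ has so far received colored edges only on $\beta(v)$ and on adjustment colors, with the $\beta(v)$-edges being exactly the already-colored members of $S(v)$ and with each adjustment color used at most once at $v$. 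Then finalizing $v_i$ amounts to: its right edges are already valid (multiplicity $|S(v_i)|\equiv 1\bpmod k$ on $\beta(v_i)$ and $1$ on each of at most $k-1$ adjustment colors), so it suffices to color its $\leq d$ left edges without spoiling $v_i$; give $v_iv_h$ the color $\beta(v_h)$ when $v_i\in S(v_h)$, and otherwise a relay color, or an adjustment color if the relays collide, picked fresh for both $v_i$ and $v_h$. One checks that $v_i$ acquires multiplicity at most $1$ in each new color and that at most $d$ left edges are spent.

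The step I expect to be the main obstacle is keeping the number of colors \emph{linear} in $d$ and in $k$ separately, i.e. $O(d+k)$ rather than $O(d^2)$ or $O(dk)$. If one insists that $\beta$ give distinct colors to the left neighbors of every vertex, then $\beta$ must properly color the auxiliary graph that joins any two vertices with a common right neighbor, and that graph can have degeneracy $\Theta(d^2)$; if instead one lets the resulting collisions be patched by improvised stray colors, a single vertex can pick up $\Theta(d)$ of them on its right edges — effectively $\Theta(dk)$ once the residue bookkeeping is included — and then can no longer be finalized with $\leq d$ edges. The real content is to choose the sets $S(\cdot)$, route the left edges, and allocate the relay/adjustment colors in a globally consistent way, so that no vertex is overloaded on more than a constant number of colors and the construction fits into $4d+2(k-1)$ colors; the bound ``at most $d$ left edges per vertex'' is precisely the resource that should make this possible. (One may assume $G$ connected, and the case $d\leq 1$, where the scheme collapses to the familiar ``root each tree and color from the leaves up'' argument with one base color per star forest plus the adjustment colors, is a useful sanity check.)
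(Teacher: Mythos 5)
Your write-up is a plan with an acknowledged hole at its center, and the hole is exactly where the difficulty of the lemma lives. You reduce everything to a ``finalization'' step in which $v_i$, all of whose right edges are already colored, must be repaired using only its at most $d$ left edges; this forces the colors of those left edges --- which are right edges of earlier vertices $v_h$ --- to be pre-committed (via $\beta(v_h)$ and $S(v_h)$) in a globally consistent way, and you never construct such an assignment. Concretely: if two left neighbors $v_h,v_{h'}$ of $v_i$ both have $v_i$ in their committed sets and $\beta(v_h)=\beta(v_{h'})$, then $v_i$ acquires degree $2$ in that base color and cannot be finalized; with only $O(d)$ base colors and up to $d$ left neighbors per vertex, such collisions cannot be excluded without further structure. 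You identify this obstruction yourself (the auxiliary ``common right neighbor'' graph having degeneracy $\Theta(d^2)$, or alternatively $\Theta(d)$ stray colors accumulating at a single vertex) and close by saying that ``the real content is to choose the sets $S(\cdot)$ \dots in a globally consistent way.'' That real content \emph{is} the proof; as written the argument establishes the lemma for no bound of the form $O(d+k)$.

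For contrast, the paper's proof dissolves the global-consistency problem by running the induction in the \emph{forward} order and letting each $v_i$ color all of its own right edges in one batch. The condition ``degree $\equiv 1 \pmod{k}$ at $v_i$'' is then enforced locally and cheaply (pack the right edges into color classes of size $\equiv 1 \pmod{k}$), while the condition imposed at the other endpoint $v_j$ of a right edge is only that the colored left edges of $v_j$ carry distinct colors, disjoint from the colors later used on $v_j$'s right edges --- a constraint that excludes at most $d-1$ colors at $v_j$ and so can be met greedily. A pigeonhole count over a pool $A$ of $d+k$ colors shows that all but at most $d+k-1$ right edges of $v_i$ can be so packed, and the leftovers receive distinct colors from a second pool $B$ of size $2d+k-2$. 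The asymmetry is the point: each vertex decides the colors of its (possibly many) right edges itself, and only ever receives weak, easily satisfiable constraints from the left. Your reverse-order scheme inverts this, making each vertex's validity depend on commitments made at unboundedly many other vertices, and that is the step you would still have to supply.
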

\begin{proof}
  Let $V(G) = \{v_1,\dots,v_n\}$ be an ordering of \(V(G)\) as above.
  In what follows, we color $G$ by coloring the right edges of $v_i$
  for each $i \in \{1,\dots,n-1\}$ in turn, so that at each step we
  have a \(\chi'_k\)-coloring of the graph spanned by the right edges
  of \(v_1,\ldots,v_i\) with the following properties: for
  each~$1\leq j\leq n$,
  each of the colored, left edges of $v_j$ is colored with a distinct
  color, and the colors used on the right edges of $v_j$ are distinct
  from the colors used on its left edges.
  We proceed by induction on $i$. Let \(i\in\{1,\ldots,n-1\}\), and
  suppose that, for every \(j<i\), the right edges of $v_j$ is
  colored as above.  This implies that all the left edges of \(v_i\)
  are colored and no right edge of \(v_i\) is colored.  In what
  follows, we color the set \(S\) of right edges of \(v_i\) while
  keeping the properties of the partial coloring.

  Since \(G\) is \(d\)-degenerate, \(v_i\) has at most \(d\) left
  edges, and hence we have at least $3d+2k-2$ colors to use on its
  right edges.  We partition these colors arbitrarily into sets \(A\)
  and \(B\) so that~$|A|=d+k$ and~$|B|\geq2d+k-2$.  Let \(j>i\) and
  suppose \(v_j\) is a (right) neighbor of \(v_i\).  Note that at most
  \(d-1\) left edges of \(v_j\) are colored.  We say that a color
  \(c\) is \emph{forbidden} at \(v_j\) if a left edge of \(v_j\) is
  colored with \(c\), and we call the colors in~$A$ that are not
  forbidden at~$v_j$ \emph{available} at~$v_j$.

  Let \(S^*\) be a maximal subset of \(S\) that can be colored with
  colors in \(A\) in a way that (a) each right edge \(v_iv_j\in S^*\)
  is colored with a color available at \(v_j\), and (b) the number of
  edges in \(S^*\) colored with any given color is congruent to
  \(1\bpmod{k}\).
  Let \(\bar{S} = S\setminus S^*\) be the set of the remaining
  edges in~\(S\).  We claim that \(|\bar{S}| < |A|\).
  Assume for a contradiction that~$|\bar S|\geq|A|$.  For each
  edge \(e=v_iv_j\in \bar{S}\), let \(A_e\) be the set of colors
  available at \(v_j\), and for each color \(x\in A\), let
  \(\bar{S}_x\) be the set of edges \(e\) in \(\bar{S}\) for which
  \(x\in A_e\).  Note that
  \(\sum_{e\in \bar{S}} |A_e| = \sum_{x\in A} |\bar{S}_x|\)
  and that $|A_e|\geq|A|-(d-1)=d+k-d+1=k+1$ for every~$e\in\bar S$.
  Therefore
  \[(k+1)|A|\leq(k+1)|\bar{S}|\leq\sum_{e\in \bar{S}} |A_e| =
    \sum_{x\in A} |\bar{S}_x|\leq |A|\max\{|\bar{S}_x|\colon x\in
    A\},\] whence \(\max\{|\bar{S}_x|\colon x\in A\}\geq k+1\).  Let
  \(z\in A\) be such that
  \(|\bar{S}_z|=\max\{|\bar{S}_x|\colon c\in A\}\).  If some edge
  in~\(S^*\) is colored with \(z\), then we color \(k\) edges in~$\bar
  S$ with color \(z\).  If no edge in \(S^*\) is colored with \(z\), then
  we color \(k+1\) edges in~$\bar S$ with color \(z\).  In both cases
  we obtain a contradiction to the maximality of \(S^*\).  This shows
  that, indeed, $|\bar S|<|A|=d+k$.

  Finally, we color the edges of \(\bar{S}\) consecutively and with
  distinct colors in $B$.  This is possible, since for
  each~$v_iw\in\bar S$ there are at most
  $d-1+|\bar{S}|-1 \leq 2d+k-3 < |B|$ colors of~$B$ that are forbidden
  (the colors forbidden at~$w$ plus the colors of $B$ used on previous
  edges of $\bar{S}$).
\end{proof}

Our main result is a consequence of Lemmas~\ref{lem:main}
and~\ref{lem:degen}.

\begin{theorem}\label{thm:main}
  For every graph \(G\) we have $\chi'_k(G) \leq 198k-101$.
\end{theorem}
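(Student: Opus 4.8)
The plan is to split $E(G)$ into a $k$-divisible part and a part that contains no non-empty $k$-divisible subgraph, and then to colour the two parts with disjoint palettes.

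First I will use the observation that an edge-disjoint union of $k$-divisible graphs is again $k$-divisible: degrees add, and each summand contributes $0\bpmod k$ to every vertex. Hence $G$ has a (unique) maximal $k$-divisible subgraph $D$, namely the union of all of its $k$-divisible subgraphs, and $F:=G-E(D)$ contains no non-empty $k$-divisible subgraph, since such a subgraph together with $D$ would form a strictly larger $k$-divisible subgraph of $G$. Every subgraph of $F$ also contains no non-empty $k$-divisible subgraph, so Lemma~\ref{lem:main} shows that $F$ is $(48k-25)$-degenerate, and then Lemma~\ref{lem:degen} yields a $\chi'_k$-colouring of $F$ with at most $4(48k-25)+2k-2=194k-102$ colours.

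It then remains to $\chi'_k$-colour the $k$-divisible graph $D$ with at most $4k+1$ further colours, for then disjoint palettes for $D$ and $F$ give a $\chi'_k$-colouring of $G$ with at most $198k-101$ colours: every colour class lies entirely inside $D$ or entirely inside $F$ and so has all of its degrees $\equiv1\bpmod k$. Since the components of $D$ are vertex-disjoint, and $\chi'_k$-colourings of them from a common palette glue to one of $D$, I may assume $D$ is connected; being $k$-divisible and non-trivial, it then has minimum degree at least $k$.

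The crux is thus to bound $\chi'_k(H)$ by $4k+1$ for a connected $k$-divisible graph $H$. Here Lemma~\ref{lem:degen} cannot be applied to $H$ itself, as $H$ may be arbitrarily dense --- for instance $H=K_m$ with $m\equiv1\bpmod k$. To bring Lemma~\ref{lem:degen} to bear I will pass to an auxiliary graph: split each vertex $v$ of $H$ into $\deg_H(v)/k$ copies and distribute the edges at $v$ into groups of $k$, one group per copy, obtaining a simple $k$-regular --- hence $k$-degenerate --- graph $H'$ with the same edge set, whose $\chi'_k$-colour classes are forced to be matchings. The obstacle that I expect to be decisive is that a $\chi'_k$-colouring of $H'$ need not descend to one of $H$, because the degrees at the copies of a vertex add up upon identification; the splitting and the colouring of $H'$ must therefore be constructed simultaneously so that, at each vertex of $H$, every colour is used a number of times that is $\equiv0$ or $\equiv1\bpmod k$. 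Making this reconciliation work within $4k+1$ colours is the heart of the proof, whereas the decomposition $G=D\cup F$ and the degeneracy bound on $F$ follow routinely from Lemmas~\ref{lem:main} and~\ref{lem:degen}.
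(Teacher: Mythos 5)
There is a genuine gap here, and it is exactly the part you yourself flag as ``the heart of the proof.'' Your decomposition $G=D\cup F$ and the bound $\chi'_k(F)\leq 194k-102$ are fine, but the theorem has been reduced to the unproven claim that every connected $k$-divisible graph $H$ satisfies $\chi'_k(H)\leq 4k+1$, and the vertex-splitting sketch does not establish it. A $\chi'_k$-colouring of the $k$-regular auxiliary graph $H'$ is just a proper edge-colouring, and after the copies of a vertex $v$ are identified, a colour class may meet $v$ in any number of edges between $0$ and $\deg_H(v)/k$; no mechanism is given for forcing each of these multiplicities into the admissible set, and no argument is given that a simultaneous choice of splitting and colouring exists, let alone with $4k+1$ colours. (Note also that your stated target condition is off: a colour used a positive multiple of $k$ times at $v$ gives $v$ degree $\equiv 0\bpmod{k}$ in a class it belongs to, which is forbidden; you would need ``exactly $0$, or $\equiv 1\bpmod{k}$.'') Since colouring $k$-divisible graphs with $O(k)$ colours is essentially as hard as the general problem, the reduction buys nothing.

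The paper sidesteps this with a small but decisive change of extremal object: take $H$ to be a maximal subgraph of $G$ in which every degree is $\equiv 1\bpmod{k}$, rather than a maximal $k$-divisible subgraph. Such an $H$ is, by definition, a single legal colour class, so it costs one colour instead of $4k+1$. Your own additivity observation still applies: if $G'=G\setminus E(H)$ and $G'[V(H)]$ contained a non-empty $k$-divisible subgraph $D_0$, then $H\cup D_0$ would contradict maximality, since adding $D_0$ changes every residue by $0$. Maximality further gives that $V(G)\setminus V(H)$ is independent and that each vertex of $V(H)$ has at most $k-1$ neighbours outside $V(H)$, whence Lemma~\ref{lem:main} makes $G'$ $(49k-25)$-degenerate and Lemma~\ref{lem:degen} colours it with $198k-102$ colours; one more colour for $E(H)$ gives $198k-101$. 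If you replace your choice of $D$ by this $H$, the rest of your argument goes through essentially unchanged and the hard step disappears.
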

\begin{proof}
  Let $H$ be a maximal subgraph of $G$ for which
  \(\deg_H(v) \equiv 1 \pmod{k}\) for every \(v\in V(H)\), and let
  \(G' = G\setminus E(H)\).
  The maximality of \(H\) implies that $V(G)\setminus V(H)$ is
  independent, and that every vertex in $V(H)$ has at most $k-1$
  neighbors in $V(G)\setminus V(H)$.  Moreover, $G'[V(H)]$ has no
  non-empty \(k\)-divisible subgraph.  By Lemma \ref{lem:main}, every
  $J \subseteq G'[V(H)]$ has less than $2(12k-6)|V(J)|$
  edges, and hence its minimum degree is less than $48k-24$.
  This implies that every subgraph of $G'$ has a vertex of degree at
  most $49k-25$, and hence $G'$ is $(49k-25)$-degenerate.
  Lemma~\ref{lem:degen} tells us that~$G'$ has a $\chi_k'$-coloring
  with at most~$198k-102$ colors.  We then color \(E(H)\) with a new
  color, and the result follows.
\end{proof}

Alon, Friedland and Kalai~\cite{alon1984regular} proved that when
\(k\) is a prime power, the bound $2(12k-6)n$ in Lemma~\ref{lem:main}
can be replaced by $(k-1)n+1$, and conjectured that this holds for
every positive integer \(k\).  This result, together with Lemma
\ref{lem:degen}, implies that $\chi'_k(G) \leq 14k-9$ for any~$G$ and
any prime power~$k$.

Our arguments can be tweaked to give slightly better multiplicative
constants, but we do not think it is worth pursuing this because we
think we would be far from the truth still.  Although we are not able
to offer any strong evidence, we conjecture the following.

\begin{conj}
  There is a constant $C$ such that $\chi'_k(G) \leq k+C$ for every
  graph~$G$.
\end{conj}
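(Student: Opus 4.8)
This conjecture is open, and the real difficulty is precisely the removal of the multiplicative constant. The degeneracy route of Lemmas~\ref{lem:main} and~\ref{lem:degen} is doomed to lose a factor: after deleting a maximal subgraph with all degrees $\equiv 1 \pmod{k}$, the remainder is only $\Theta(k)$-degenerate (it is $(49k-25)$-degenerate in the proof of Theorem~\ref{thm:main}), and Lemma~\ref{lem:degen} then spends $\Theta(k)$ colors on it. So I would abandon degeneracy entirely and aim for the sharp constant $C=2$ through a single structural reduction. The plan is to prove that every graph $G$ admits a spanning subgraph $F$ such that $\deg_F(v)\equiv 1 \pmod{k}$ or $\deg_F(v)=0$ at every vertex, and such that $H:=G\setminus E(F)$ has maximum degree at most $k$. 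Granting this, I color $E(F)$ with a single color (it is a valid $\chi'_k$-class, since every non-isolated vertex of $F$ has degree $\equiv 1 \pmod{k}$), and I color $H$ by Vizing's theorem with $\Delta(H)+1\le k+1$ colors, using that every class of a proper edge-coloring is a matching and hence a valid $\chi'_k$-class. This gives $\chi'_k(G)\le k+2$, consistent with Pyber's $\chi'_2(G)\le 4=k+2$. The decisive feature is that the \emph{coloring} step makes no connectivity or density assumption, so the whole conjecture collapses to the existence of the single subgraph~$F$.

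It remains to produce $F$. Writing $D=\deg_G(v)$, the bound $\Delta(H)\le k$ forces us to delete at most $k$ edges at $v$, so $\deg_F(v)$ must satisfy $\deg_F(v)\ge D-k$ together with $\deg_F(v)=0$ or $\deg_F(v)\equiv 1 \pmod{k}$. Thus the admissible degree at $v$ lies in the set $A_v$ consisting of $0$ (allowed only when $D\le k$) and the values $t\equiv 1 \pmod{k}$ with $\max(1,D-k)\le t\le D$. Since any window of $k$ consecutive integers meets the residue class $1 \bmod k$, each $A_v$ is non-empty, and when $D>k$ its top element is within $k-1$ of $D$; hence the target is simply a spanning subgraph with $\deg_F(v)\in A_v$ for all $v$. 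I would try to establish existence by first reducing to a highly connected core via Lemma~\ref{lem:connectivity} and peeling, and then invoking a modular factor theorem in the spirit of Lemma~\ref{lem:factormodk}: the $f$-factor and $\mathbb{Z}_k$-orientation machinery of Tutte and Thomassen, or the Combinatorial Nullstellensatz argument of Alon--Friedland--Kalai, are the natural tools for prescribing degrees modulo~$k$ while keeping their actual size close to $D$.

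The main obstacle is that the sets $A_v$ have \emph{large gaps}: consecutive admissible degrees are $k$ apart, which places the problem outside the tractable regime of Lov\'asz's general factor theorem (which requires each admissible set to have no gap longer than one) and outside the usual good characterizations. Overcoming this seems to demand genuinely exploiting the arithmetic structure of $A_v$, namely that it is an arithmetic progression of common difference $k$ and that only the residue, not the exact value, is prescribed. For prime-power $k$ I expect the polynomial method of Alon--Friedland--Kalai to deliver such an $F$ more or less directly, which would already settle $\chi'_k(G)\le k+2$ in those cases; the general $k$ case is where I expect the real work, and the reason the conjecture remains open, to lie. A further subtlety that any existence proof must reconcile with the local windows $A_v$ is the global consistency constraint that $\sum_v \deg_F(v)$ be even, which couples the choices across vertices and is exactly the sort of parity obstruction that the reduction to a highly connected core is meant to neutralize.
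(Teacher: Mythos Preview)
The paper does not prove this statement: it is explicitly left as an open conjecture, with the authors noting that they ``are not able to offer any strong evidence''. So there is no proof in the paper to compare against, and your write-up is, appropriately, a strategy rather than a proof.

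Your reduction step is clean and correct: if a spanning subgraph $F$ exists with $\deg_F(v)\in\{0\}\cup\{t:t\equiv1\pmod k\}$ for every $v$ and with $\Delta(G\setminus E(F))\le k$, then one colour for $E(F)$ together with a Vizing colouring of $G\setminus E(F)$ yields $\chi'_k(G)\le k+2$. The difficulty is that such an $F$ need not exist, and the obstruction is not merely ``hard'' but genuinely fatal in small cases. Take $G=K_5$ and $k=2$. Every vertex has degree $4>k$, so $\deg_F(v)=0$ is excluded, and the only odd value in $[4-2,4]$ is $3$; hence $A_v=\{3\}$ for all five vertices and $\sum_v\deg_F(v)=15$ is forced to be odd, which is impossible. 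More generally, whenever $\deg_G(v)>k$ and $\deg_G(v)\not\equiv1\pmod k$, the window $[\deg_G(v)-k,\deg_G(v)]$ contains exactly one integer congruent to $1\pmod k$, so $A_v$ is a singleton and there is no local slack to repair a global parity failure; $K_n$ with $n$ odd and $k=2$ always fails in this way. You anticipate the parity constraint and suggest that passing to a highly connected core neutralises it, but $K_5$ is already $4$-connected and the obstruction persists intact.

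So the central lemma your plan rests on is false as stated. Any rescue would have to change the decomposition itself: allow more than one ``dense'' colour class (so that parity can be split across them), or relax the target $\Delta(H)\le k$ and recover the extra Vizing colours elsewhere, or abandon the single-$F$ picture altogether. In particular, the polynomial-method route you sketch for prime powers cannot produce this specific $F$ either, since the nonexistence for $K_5$ is combinatorial, not a limitation of the method.
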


We know that~$C$ in the conjecture above has to be at least~$2$,
because one can prove that the graph~$G$ obtained from a $K_{k,k}$ by
adding a universal vertex satisfies $\chi'_k(G) = k+2$.

\bibliographystyle{acm}
\bibliography{ref}

\endgroup 
\end{document}